\newtheorem{theorem}{Theorem}[section]
\newtheorem{lemma}[theorem]{Lemma}
\newtheorem{proposition}[theorem]{Proposition}
\theoremstyle{definition}
\newtheorem{definition}[theorem]{Definition}
\theoremstyle{remark}
\newtheorem{remark}[theorem]{Remark}
\numberwithin{equation}{section}
\begin{document}

\title{C*-ALGEBRAS ASSOCIATED WITH ITERATED FUNCTION SYSTEMS}

\author{Gilles G. de Castro}
\address{Instituto de Matemática, Universidade Federal do Rio Grande do Sul, Av. Bento Gonçalves, 9500, 91509-900 Porto Alegre, RS - Brazil}
\curraddr{Département de Mathématiques, Université d'Orléans,  B.P. 6759, 45067 Orléans cedex 2, France}
\email{gillescastro@gmail.com}
\thanks{Partially supported by CAPES}

\subjclass[2000]{Primary 46L55, 37B99; Secondary 28A80, 37B10, 46L08}
\keywords{Iterated function systems, C*-algebras, crossed products, groupoids}

\date{November 3, 2008}

\begin{abstract}
We review Kajiwara and Watatani's construction of a C*-algebra from an iterated function system (IFS). If the IFS satisfies the finite branch condition or the open set condition, we build an injective homomorphism from Kajiwara-Watatani algebras to the Cuntz algebra, which can be thought as the algebra of the lifted system, and we give the description of its image. Finally, if the IFS admits a left inverse we show that the Kajiwara-Watatani algebra is isomorphic to an Exel's crossed product.
\end{abstract}

\maketitle

\section{Introduction}

In \cite{KW1}, Kajiwara and Watatani defined a C*-algebra defined from an iterated
function system (IFS). Although their paper was entitled C*-algebras associated
with self-similar sets, they gave examples of different iterated function
systems which give rise to the same self-similar set but which associated
algebras are not isomorphic. So their algebra depends not only on the
self-similar set but on the dynamics of the iterated function system.

If the IFS satisfies the strong separation condition, then the system can be interpreted as the inverses branches of a local homeomorphism. For an arbitrary IFS we can lift it to a new one that satisfies the strong separation condition \cite{B1}. Ionescu and Muhly suggested in \cite{IM} the construction of a C*-algebra from an IFS by lifting it and using Renault-Deaconu construction \cite{D1}, \cite{R1} of a groupoid C*-algebra from a local homeomorphism. As we will see this local homeomorphism we find is topologically conjugate to the left shift on $\{1,\ldots,d\}^{\mathbb{N}}$ and the algebra we find is the Cuntz algebra $\mathcal{O}_d$.

From the relations between an IFS and its lifted system, we will build a natural homomorphism from the Kajiwara-Watatani algebra to the Cuntz algebra and thus connect Kajiwara anda Watatani's construction with Ionescu and Muhly's suggestion. We show that this homomorphism is injective and show that its image is generated by the algebra of the self-similar set associated to the IFS and an isometry $S$ similar to a crossed product description.

It may happen that the IFS admits a left inverse which is not necessarily a local homeomorphism. In this case Renault-Deaconu's construction no longer works and we have different approaches to build a C*-algebra. In this paper, we show that under some assumptions, the algebra considered by Kajiwara and Watatani can be seen as an Exel's crossed product \cite{E1}.

\section{Iterated function systems}

In this section, we review some of the basic theory of iterated function systems and self-similar sets (see for instance \cite{B1}, \cite{Ed1} and \cite{F1}). Fix $(X,\rho )$ be a compact metric space.

\begin{definition}
We say that a function $\gamma :X\rightarrow X$ is

\begin{itemize}
\item a contraction if $\exists c\in (0,1)$ such that $\rho (\gamma
(x),\gamma (y))\leq c\rho (x,y)$;

\item a proper contraction if $\exists c_{1},c_{2}\in (0,1)$ such that $%
c_{1}\rho (x,y)\leq \rho (\gamma (x),\gamma (y))\leq c_{2}\rho (x,y)$;

\item a similarity if $\exists c>0$ such that $\rho (\gamma (x),\gamma
(y))=c\rho (x,y)$.
\end{itemize}
\end{definition}

\begin{definition}
An iterated function system (IFS) over $X$ is a finite set of continuous
functions $\left\{ \gamma_{i}:X\rightarrow X\right\} _{i=1}^{d}$%
. We say that the IFS\ is hyperbolic if all functions are contractions.
\end{definition}

Throughout this paper we will always assume that the system is hyperbolic
unless stated otherwise.

\begin{proposition}
Given an IFS $\left\{ \gamma_{i}\right\} _{i=1}^{d}$, there is
a unique compact nonempty subset $K\ $of $X$ such that%
\begin{equation}
K=\cup _{i=1}^{d}\gamma _{i}(K).  \label{eq:InvSet}
\end{equation}%
We will call this set the attractor of the $IFS$ and say it is self-similar.
\end{proposition}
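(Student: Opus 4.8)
The plan is to realize $K$ as the fixed point of the \emph{Hutchinson operator} acting on the hyperspace of compact subsets, and then to invoke the Banach fixed point theorem. First I would introduce $\mathcal{H}(X)$, the collection of all nonempty compact subsets of $X$, equipped with the Hausdorff metric
$\rho_H(A,B)=\max\{\sup_{a\in A}\rho(a,B),\ \sup_{b\in B}\rho(b,A)\}$,
where $\rho(x,C)=\inf_{c\in C}\rho(x,c)$. Since $(X,\rho)$ is compact, it is a standard fact (see the references cited at the start of the section) that $(\mathcal{H}(X),\rho_H)$ is itself a complete metric space. This completeness is exactly what will allow us to extract a fixed point.

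Next I would define $F\colon\mathcal{H}(X)\to\mathcal{H}(X)$ by $F(A)=\bigcup_{i=1}^{d}\gamma_i(A)$. This is well defined: each $\gamma_i$ is continuous, so $\gamma_i(A)$ is compact whenever $A$ is, and a finite union of nonempty compact sets is again nonempty and compact. With this notation, equation \eqref{eq:InvSet} is precisely the assertion that $K$ is a fixed point of $F$, so the proposition reduces to showing that $F$ has a unique fixed point in $\mathcal{H}(X)$.

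The key step is to show that $F$ is a contraction with respect to $\rho_H$. I would isolate two elementary estimates. First, if $\gamma$ is a contraction with constant $c$, then $\rho_H(\gamma(A),\gamma(B))\le c\,\rho_H(A,B)$, which follows directly from the definition of $\rho_H$ together with $\rho(\gamma(x),\gamma(y))\le c\,\rho(x,y)$. Second, the union estimate $\rho_H\bigl(\bigcup_i A_i,\bigcup_i B_i\bigr)\le\max_i\rho_H(A_i,B_i)$. Writing $c=\max_i c_i$, where $c_i\in(0,1)$ is the contraction constant of $\gamma_i$, these two facts combine to give $\rho_H(F(A),F(B))\le c\,\rho_H(A,B)$. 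Because the system is hyperbolic we have $c<1$, so $F$ is genuinely contractive.

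Finally I would apply the Banach fixed point theorem on the complete space $(\mathcal{H}(X),\rho_H)$: the contraction $F$ possesses a unique fixed point $K\in\mathcal{H}(X)$, which by construction is a nonempty compact subset of $X$ satisfying \eqref{eq:InvSet}, and uniqueness of $K$ among nonempty compact sets is immediate from uniqueness of the fixed point. I expect the only real subtleties to be the completeness of $(\mathcal{H}(X),\rho_H)$ and the verification of the union estimate; both are routine but require some care with the $\sup$--$\inf$ structure of the Hausdorff metric. Everything else is a direct application of the contraction mapping principle.
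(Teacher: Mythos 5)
Your argument is correct and is precisely the standard Hutchinson-operator proof found in the references (\cite{B1}, \cite{Ed1}, \cite{F1}) that the paper cites for this result; the paper itself states the proposition without proof as part of its review of basic IFS theory. The only points needing care are exactly the ones you flag --- completeness of $(\mathcal{H}(X),\rho_H)$ and the union estimate --- and both are routine, so nothing is missing.
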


Note that because of (\ref{eq:InvSet}) the attractor is invariant by all $%
\gamma _{i}$ and we can restrict the IFS to its attractor. From now on, we assume that $X=K$.

\begin{definition}
We say that an IFS $\left\{ \gamma_{i}\right\} _{i=1}^{d}$
satisfies:

\begin{itemize}
\item the strong separation condition if the union in (\ref{eq:InvSet}) is a
disjoint union;

\item the open set condition if $\exists U\subseteq K$ open and dense such
that%
\[
U\subseteq \cup _{i=1}^{d}\gamma _{i}(U). 
\]
\end{itemize}
\end{definition}

Let's denote $\Omega =\left\{ 1,\ldots ,d\right\} ^{\mathbb{N}}$ with the
product topology, $\sigma :\Omega \rightarrow \Omega $ the left shift and $%
\sigma _{i}:\Omega \rightarrow \Omega $ the function given by%
\[
\sigma _{i}(i_{0},i_{1,}\ldots )=(i,i_{0},i_{1,}\ldots ) 
\]%
where $i\in \left\{ 1,\ldots ,d\right\} $.

\begin{proposition}
\label{prop:CodeMap}Let $\left\{ \gamma_{i}\right\}
_{i=1}^{d}$ be an IFS and $K\ $its attractor then there is a continuous
surjection $F:\Omega \rightarrow K$ such that $F\circ \sigma _{i}=\gamma
_{i}\circ F$. This map is given by the formula%
\[
F(i_{0},i_{1,}\ldots )=\lim_{n\rightarrow \infty }\gamma _{i_{0}}\circ
\cdots \circ \gamma _{i_{n}}(x)
\]%
for an arbitrary $x\in K$. If the IFS satisfies the strong separation
condition $F$ is a homeomorphism.
\end{proposition}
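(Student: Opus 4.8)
The plan is to prove this proposition (Proposition \ref{prop:CodeMap}) by constructing the map $F$ explicitly, verifying that the limit defining it exists and is independent of the starting point $x$, and then checking continuity, the intertwining relation, surjectivity, and finally the homeomorphism claim under the strong separation condition.

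First I would establish that the limit defining $F$ exists. For a fixed word $(i_0,i_1,\ldots)\in\Omega$, write $\gamma_{i_0}\circ\cdots\circ\gamma_{i_n}$ and let each $\gamma_i$ have contraction constant bounded by some common $c\in(0,1)$ (taking the maximum over the finitely many maps). Then for any $x,y\in K$ we have $\rho(\gamma_{i_0}\circ\cdots\circ\gamma_{i_n}(x),\gamma_{i_0}\circ\cdots\circ\gamma_{i_n}(y))\leq c^{\,n+1}\rho(x,y)$, so the diameter of the image set $\gamma_{i_0}\circ\cdots\circ\gamma_{i_n}(K)$ shrinks to zero. These image sets are nested and compact, hence their intersection is a single point, which I define to be $F(i_0,i_1,\ldots)$; this simultaneously shows the limit exists and is independent of $x$. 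The same estimate, applied to two words that agree on a long initial segment, gives continuity: if $(i_0,\ldots,i_n)=(j_0,\ldots,j_n)$ then $F$ of the two words both lie in the set $\gamma_{i_0}\circ\cdots\circ\gamma_{i_n}(K)$ of diameter at most $c^{\,n+1}\operatorname{diam}(K)$, so agreement on long prefixes forces closeness in $K$, which is exactly continuity for the product topology on $\Omega$.

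Next I would verify the intertwining relation $F\circ\sigma_i=\gamma_i\circ F$. Applying $\sigma_i$ prepends $i$ to a word, so $F(\sigma_i(i_0,i_1,\ldots))=\lim_n \gamma_i\circ\gamma_{i_0}\circ\cdots\circ\gamma_{i_n}(x)$; since $\gamma_i$ is continuous it commutes with the limit, giving $\gamma_i(\lim_n \gamma_{i_0}\circ\cdots\circ\gamma_{i_n}(x))=\gamma_i(F(i_0,i_1,\ldots))$, as desired. For surjectivity, I would use the invariance equation (\ref{eq:InvSet}). The image $F(\Omega)$ is compact (continuous image of the compact space $\Omega$), and the intertwining relation shows $F(\Omega)=\bigcup_{i=1}^d \gamma_i(F(\Omega))$, since every word starts with some letter $i$. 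Thus $F(\Omega)$ is a nonempty compact set satisfying the defining equation of the attractor, so by the uniqueness in the preceding proposition $F(\Omega)=K$.

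Finally, for the homeomorphism claim under the strong separation condition, since $\Omega$ is compact and $K$ is Hausdorff, it suffices to show $F$ is injective (a continuous bijection from a compact space to a Hausdorff space is automatically a homeomorphism). I expect injectivity to be the main obstacle, and it is where strong separation enters. The idea is that if two words $(i_0,i_1,\ldots)$ and $(j_0,j_1,\ldots)$ differ, say first at position $k$ with $i_k\neq j_k$, then $F$ of the two words lie in sets of the form $\gamma_{i_0}\circ\cdots\circ\gamma_{i_{k-1}}(\gamma_{i_k}(K))$ and $\gamma_{i_0}\circ\cdots\circ\gamma_{i_{k-1}}(\gamma_{j_k}(K))$; because the union $\bigcup_i\gamma_i(K)$ is \emph{disjoint}, the sets $\gamma_{i_k}(K)$ and $\gamma_{j_k}(K)$ are disjoint, and applying the injective maps $\gamma_{i_0},\ldots,\gamma_{i_{k-1}}$ (injectivity of each $\gamma_i$ follows from the fact that the contraction $\gamma_i$ maps $K$ bijectively onto $\gamma_i(K)$, which under strong separation is forced) preserves disjointness, so the two image points are distinct. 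The care needed here is to confirm that strong separation indeed yields injectivity of the individual maps $\gamma_i$ on $K$ and that disjointness propagates correctly through the composition.
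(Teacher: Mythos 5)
The paper offers no proof of this proposition; it is quoted as standard background from the references (\cite{B1}, \cite{Ed1}, \cite{F1}), so there is no in-paper argument to compare against. That said, your construction of $F$ via the nested compact sets $\gamma_{i_0}\circ\cdots\circ\gamma_{i_n}(K)$ of diameter at most $c^{n+1}\mathrm{diam}(K)$, the resulting continuity estimate, the intertwining relation, and the surjectivity argument via uniqueness of the attractor are all correct and constitute exactly the standard proof.

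There is, however, a genuine gap in the final step. You assert that injectivity of each $\gamma_i$ on $K$ ``under strong separation is forced.'' It is not. Strong separation says only that the images $\gamma_i(K)$ are pairwise disjoint; it says nothing about whether an individual $\gamma_i$ identifies points of $K$. Concretely, take $d$ constant maps $\gamma_i\equiv p_i$ with the $p_i$ distinct: these are contractions, the attractor is $K=\{p_1,\ldots,p_d\}$, the union $K=\sqcup_{i}\gamma_i(K)$ is disjoint, and yet $F(\omega)=p_{\omega_0}$ is very far from injective --- indeed $F$ injective would force each $\gamma_i|_K=F\circ\sigma_i\circ F^{-1}$ to be injective, so some injectivity hypothesis is unavoidable. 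The homeomorphism claim therefore needs the $\gamma_i$ to be injective on $K$ as an additional (usually implicit) hypothesis; this is automatic for proper contractions and similarities as defined in Section 2, and is assumed in the cited references. Once that is granted, the rest of your last paragraph is sound: the first index of disagreement places the two tail-images in the disjoint sets $\gamma_{i_k}(K)$ and $\gamma_{j_k}(K)$, the injective common prefix $\gamma_{i_0}\circ\cdots\circ\gamma_{i_{k-1}}$ preserves that disjointness, and the compact-to-Hausdorff argument gives continuity of $F^{-1}$.
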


\begin{remark}
Note that under the strong separation condition, we can define the
function $\gamma =F\circ \sigma \circ F^{-1}$ and in this case the functions 
$\gamma _{i}$ are exactly the inverse branches of $\gamma $. Moreover $F$ gives
us a topological conjugacy between $\gamma$ and the shift $\sigma$.
\end{remark}

For an arbitrary IFS $\left\{ \gamma_{i}\right\}_{i=1}^{d}$, 
we can always build a new one that satisfies the strong
separation condition and which share some properties with the original one
\cite{B1}. We define $\widetilde{X}=K\times \Omega $ and define the
functions $\widetilde{\gamma }_{i}:\widetilde{X}\rightarrow \widetilde{X}$
by $\widetilde{\gamma }_{i}(x,\omega )=(\gamma _{i}(x),\sigma _{i}(\omega ))$.
Let $\widetilde{K}=\{(x,\omega)\in X\times\Omega|F(\omega)=x\}$ then
$$\widetilde{K}=\cup _{i=1}^{d}\widetilde{\gamma _{i}}(\widetilde{K}).$$
And it's easily checked that $\{ \widetilde{\gamma }_{i}:\widetilde{K} %
\to\widetilde{K}\} _{i=1}^{d}$ satisfy the strong separation condition.

\begin{definition}
The IFS $\{ \widetilde{\gamma }_{i}\} _{i=1}^{d}$ as above is called the lifted
system of $\left\{ \gamma_{i}\right\}_{i=1}^{d}$.
\end{definition}

\section{C*-algebras associated with an IFS}

We start this section by giving a description of the Cuntz algebra as a groupoid C*-algebra which will be useful in some proofs. We review some of the key elements of Cuntz-Pimsner algebras which will be used to build Kajiwara-Watatani algebras. We build the homomorphism from Kajiwara-Watatani algebras to the Cuntz algebra from a very natural covariant representation. We give some basic properties of this homomorphism. Finally, in the last subsection, we compare the Kajiwara-Watatani algebra to a crossed product construction.

\subsection{Cuntz algebras}\label{sec:CuntzAlg}

\begin{definition}\cite{C1}
For $d\in\mathbb{N}\backslash\{0\}$, the Cuntz algebra $\mathcal{O}_d$ is the C*-algebra generated by $d$ isometries
satisfying the relation $\sum_{i=1}^d{S_iS_i^*}=1$.
\end{definition}

For the sake of some proofs, we review the construction of the Cuntz algebra
as a groupoid C*-algebra \cite{D1}, \cite{R1}. Let
$$G=\{(\omega,m-n,\tau)\in\Omega\times\mathbb{Z}\times\Omega:m,n\in\mathbb{N};
\sigma^m(\omega)=\sigma^n(\tau)\}$$
with the product and the inverse given by
$$(\omega,m-n,\tau)(\tau,k-l,\nu)=(\omega,(m+k)-(n+l),\nu)$$
$$(\omega,m-n,\tau)^{-1}=(\tau,n-m,\omega).$$

We give a basis for the topology on $G$ by the sets
$$B(U,V,m,n):=\{(\omega,m-n,\tau)\in G:\omega\in U,\tau\in V\}$$
where $m,n\in\mathbb{N}$ and $U$ and $V$ are open subsets of $\Omega$
such that $\sigma^m|_U$, $\sigma^n|_V$ are homeomorphisms with $\sigma^m(U)=
\sigma^n(V)$. With this topology $G$ is étale and so admits a Haar system by the 
counting measures.

The multiplication in $C_c(G)$ is given by
$$(p*q)(\omega,m-n,\tau)=\sum{p(\omega,k-l,\nu)q(\nu,(n+k)-(m+l),\tau)}$$
where the sum is taken over all $k,l\in\mathbb{N}$ and $\nu\in\Omega$ such that $\sigma^k(\omega)=\sigma^l(\nu)$ and $\sigma^{n+k}(\nu)=\sigma^{m+l}(\tau)$; and the involution by
$$p^{\ast}(\omega,m-n,\tau)=\overline{p(\tau,n-m,\omega)}$$
for $p,q\in C_c(G)$.

We refer to \cite{R1} for the construction of a norm in $C_c(G)$. For us, it suffices
to know that there exists a norm in $C_c(G)$ such that its completion with respect
to this norm is $\mathcal{O}_d$.

Finally, given $h\in C(\Omega)$ we define a function $h\in C_c(G)$ by
$$h(\omega,m-n,\tau)=[m=n][\omega=\tau]h(\omega)$$
where $[\cdot]$ is the boolean function that gives $1$ if its argument is true and $0$ otherwise.

We also define a function $S\in C_c(G)$ by
$$S(\omega,m-n,\tau)=[m-n=1][\sigma(\omega)=\tau].$$
And we note that if $\chi_{\overline{i}}$ is the characteristic function of the
cylinder $\overline{i}:=\{\omega\in\Omega:\omega_0=i\}$ 
then $S_i=d^{1/2}\chi_{\overline{i}}*S$ for $i\in\{1,\ldots,d\}$ are $d$ isometries
that satisfies the Cuntz relation and generates $C^{\ast}(G)$.
	
\subsection{Cuntz-Pimsner algebras}
	
	We briefly recall the key elements for the construction of Cuntz-Pimsner algebras (\cite{K1}, \cite{P1}) that will be used throughout the paper. For that fix $A$ a C*-algebra.
	\begin{definition}
		A \emph{(right) Hilbert C*-module over $A$} is a (right-)$A$-module $E$ with a sesquilinear map $\left<\ ,\ \right>:E\times E\to A$ such that:
		\begin{itemize}
			\item[(i)] $\left<\xi,\eta a\right>=\left<\xi,\eta\right>a$;
			\item[(ii)] $(\left<\xi,\eta\right>)^{\ast}=\left<\eta,\xi\right>$;
			\item[(iii)] $\left<\xi,\xi\right>\geq 0$;
			\item[(iv)] $E$ is complete with respect to the norm $||\xi||_2=||\left<\xi,\xi\right>||^{1/2}$
		\end{itemize}
		for $a\in A$ and $\xi,\eta\in E$.
		We say that $E$ is \emph{full} if $\left<E,E\right>$ is dense in $A$.
	\end{definition}
		Let $E$ be a Hilbert C*-module and denote by $\mathcal{L}(E)$ the space of adjointable operators in $E$. We note that $\mathcal{L}(E)$ is a C*-algebra.	
	For $\xi,\eta\in E$ we define an operator $\theta_{\xi,\eta}:E\to E$ by $\theta_{\xi,\eta}(\zeta)=\xi\left<\eta,\zeta\right>$. This is an adjointable operator and we denote by $\mathcal{K}(E)$ the closed subspace of $\mathcal{L}(E)$ generated by all $\theta_{\xi,\eta}$.
	\begin{definition}
		A \emph{C*-correspondence} over $A$ is a Hilbert C*-module $E$ together with a C*-homomorphism $\phi:A\to\mathcal{L}(E)$.
	\end{definition}
	Let ($E$,$\phi$) be a C*-correspondence over $A$ and for simplicity suppose that $\phi$ is faithful. We denote by $J_E$ the ideal $\phi^{-1}(\mathcal{K}(E))$.
	\begin{definition}\label{def:PimsnerCovRep}
		A pair $(\iota,\psi)$ of maps $\iota:A\to B$, $\psi:E\to B$, where $B$ is a C*-algebra and $\iota$ a C*-homomorphism, is said to be a \emph{covariant representation of $E$} if:
		\begin{itemize}
			\item[(i)] $\psi(\phi(a)\xi b)=\iota(a)\psi(\xi)\iota(b)$;
			\item[(ii)] $\psi(\xi)^{\ast}\psi(\eta)=\iota(\left<\xi,\eta\right>)$;
			\item[(iii)] $(\psi,\iota)^{(1)}(\phi(c))=\iota(c)$ where the function $(\psi,\iota)^{(1)}:\mathcal{K}(E)\to B$ is given by $(\psi,\iota)^{(1)}(\theta_{\xi,\eta})=\psi(\xi)\psi(\eta)^{\ast}$,
		\end{itemize}
		for $a,b\in A$, $\xi,\eta\in E$ and $c\in J_E$.
	\end{definition}
	For a C*-correspondence ($E$,$\phi$), there exists an algebra $\mathcal{O}(E)$ and a covariant representation $(k_A,k_E)$ that is universal, in the sense that if $(\iota,\psi)$ is a covariant representation of $E$ in a C*-algebra $B$, there is a unique C*-homomorphism $\iota\times\psi:\mathcal{O}(E)\to B$ such that $\iota=(\iota\times\psi)\circ k_A$ and $\psi=(\iota\times\psi)\circ k_E$
	\begin{definition}
		The algebra $\mathcal{O}(E)$ is called the \emph{Cuntz-Pimsner algebra} of $E$.
	\end{definition}

\subsection{Kajiwara-Watatani algebras}

Let $\Gamma=\left\{ \gamma _{i}\right\} _{i=1}^{d}$ be an iterated function system
and $K$ its attractor. We recall the C*-correspondence defined in \cite{KW1}. We let $A=C(K)$, 
$E=C(\mathcal{G})$ where%
$$\mathcal{G}=\cup _{i=1}^{d}\mathcal{G}_i$$
with
$$\mathcal{G}_i\left\{ (x,y)\in K\times K:x=\gamma _{i}\right\}$$
being the cographs in the terminology of \cite{KW1}. The structure of C*-correspondence is given by%
\[
(\phi(a)\xi b)(x,y)=a(x)\xi(x,y)b(y) 
\]%
and%
\[
\left\langle \xi,\eta\right\rangle _{A}(y)=\sum_{i=1}^{d}\overline{\xi(\gamma
_{i}(y),y)}\eta(\gamma _{i}(y),y) 
\]%
for $a,b\in A$ and $\xi,\eta\in E$.

	\begin{proposition}[\cite{KW1}]\label{prop:BasicResultsCorr}
		$(E=C(\mathcal{G}),\phi)$ is a full C*-correspondence over $A=C(K)$ and $\phi:A\to\mathcal{L}(E)$ is faithful and unital. Moreover, the Hilbert module norm is equivalent to the sup norm in $C(\mathcal{G})$.
	\end{proposition}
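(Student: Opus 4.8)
The plan is to prove the norm equivalence first, as everything else rests on it. Each cograph $\mathcal{G}_i=\{(\gamma_i(y),y):y\in K\}$ is the image of $K$ under the continuous map $y\mapsto(\gamma_i(y),y)$, so $\mathcal{G}=\cup_{i=1}^d\mathcal{G}_i$ is compact and $(C(\mathcal{G}),\|\cdot\|_\infty)$ is a Banach space. The inner-product axioms (i)--(iii) are then immediate pointwise computations: (i) holds because the right action is pointwise multiplication, (ii) because the involution of $A=C(K)$ is complex conjugation, and $\langle\xi,\xi\rangle_A(y)=\sum_{i=1}^d|\xi(\gamma_i(y),y)|^2\ge 0$ gives positivity. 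That $\langle\xi,\eta\rangle_A$ actually lies in $A$ is clear, each summand $y\mapsto\overline{\xi(\gamma_i(y),y)}\,\eta(\gamma_i(y),y)$ being continuous.

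For the equivalence of norms I would use that every point of $\mathcal{G}$ has the form $(\gamma_i(y),y)$, so that $\|\xi\|_\infty^2=\sup_{y\in K}\max_{1\le i\le d}|\xi(\gamma_i(y),y)|^2$ while $\|\xi\|_2^2=\|\langle\xi,\xi\rangle_A\|_\infty=\sup_{y\in K}\sum_{i=1}^d|\xi(\gamma_i(y),y)|^2$. The elementary inequalities $\max_i t_i\le\sum_i t_i\le d\max_i t_i$ for nonnegative $t_i$ give $\|\xi\|_\infty\le\|\xi\|_2\le\sqrt d\,\|\xi\|_\infty$. In particular $\|\cdot\|_2$ separates points, hence is a genuine norm, and since $(C(\mathcal{G}),\|\cdot\|_\infty)$ is complete the equivalence upgrades this to completeness for $\|\cdot\|_2$; this is axiom (iv), so $E$ is a Hilbert $A$-module.

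Turning to $\phi$, the left action $(\phi(a)\xi)(x,y)=a(x)\xi(x,y)$ maps $E$ into $E$ and is right $A$-linear, and a one-line computation shows $\langle\phi(a)\xi,\eta\rangle_A=\langle\xi,\phi(\bar a)\eta\rangle_A$, so $\phi(a)$ is adjointable with $\phi(a)^\ast=\phi(\bar a)$; boundedness follows from $\|\phi(a)\xi\|_2\le\sqrt d\,\|a\|_\infty\|\xi\|_\infty\le\sqrt d\,\|a\|_\infty\|\xi\|_2$. Thus $\phi$ maps into $\mathcal{L}(E)$ and is plainly a unital $\ast$-homomorphism. The only place the dynamics enters is faithfulness: if $\phi(a)=0$ then, choosing for each $(x,y)\in\mathcal{G}$ a $\xi$ nonvanishing there, we get $a(x)=0$ on the first-coordinate projection of $\mathcal{G}$, which is $\cup_{i=1}^d\gamma_i(K)=K$ by the invariance \eqref{eq:InvSet}; hence $a=0$.

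Fullness is then immediate: the constant function $1\in C(\mathcal{G})$ satisfies $\langle 1,1\rangle_A(y)=\sum_{i=1}^d 1=d$, so $1_A=\frac{1}{d}\langle 1,1\rangle_A\in\langle E,E\rangle$, and since $\langle E,E\rangle$ is stable under right multiplication by $A$ (axiom (i)) it contains $1_A\cdot A=A$. I expect the norm equivalence to be the real crux, not because it is difficult but because completeness, the boundedness of $\phi$, and the fact that $\|\cdot\|_2$ is a norm rather than a seminorm all depend on it; the one genuinely structural ingredient, beyond routine bookkeeping, is the use of the self-similarity identity \eqref{eq:InvSet} in establishing faithfulness of $\phi$.
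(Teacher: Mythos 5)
The paper states this proposition without proof, simply citing \cite{KW1}, so there is no in-paper argument to compare against; your proof is correct and is the standard one, with the two-sided estimate $\|\xi\|_\infty\le\|\xi\|_2\le\sqrt{d}\,\|\xi\|_\infty$ (valid because every point of $\mathcal{G}$ has the form $(\gamma_i(y),y)$) carrying completeness, boundedness of $\phi$, and definiteness of the inner product, and the self-similarity identity $K=\cup_i\gamma_i(K)$ giving faithfulness. This matches the argument in Kajiwara--Watatani's original paper in all essentials.
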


\begin{definition}
		The Kajiwara-Watatani algebra $\mathcal{O}_{\Gamma}$ associated to $\Gamma$ is the Cuntz-Pimsner algebra associated to the C*-correspondence defined above.
\end{definition}

Regarding $\mathcal{O}_d$ as $C^*(G)$ as in subsection \ref{sec:CuntzAlg} and recalling the code map $F$ given in proposition \ref{prop:CodeMap}, we define $\iota :A\rightarrow \mathcal{O}_d$ by 
\begin{equation}
\iota (a)(\omega,m-n,\tau)=[m=n][\omega=\tau]a(F(\omega))  \label{eq:iota}
\end{equation}
and $\psi :E\rightarrow \mathcal{O}_d$ by
\begin{equation}
\psi (\xi)(\omega,m-n,\tau)=[m-n=1][\sigma(\omega)=\tau]\xi(F(\omega),F(\tau))  \label{eq:psi}
\end{equation}

Note that if $\sigma(\omega)=\tau$, then $\sigma_{\omega_0}(\tau)=\omega$ and $F(\omega)=\gamma_{\omega_0}(F(\tau))$ so that $(F(\omega),F(\tau))\in \mathcal{G}$ and $\psi$ is well defined.

Before showing that this give us a Cuntz-Pimsner covariant representation,
let us recall some definitions and results from \cite{KW1} and \cite{KW2}.

\begin{definition}
Let $\Gamma=\{\gamma _{1},...,\gamma _{d}\}$ be an IFS, we define the following sets 
	$$B(\gamma _{1},...,\gamma _{d}):=\{x\in K| \exists y\in K\ \exists i\neq j:x=\gamma_i(y)=\gamma_j(y)\};$$
	$$C(\gamma _{1},...,\gamma _{d}):=\{y\in K| \exists i\neq j:\gamma_i(y)=\gamma_j(y)\}.$$
  $$I(x):=\{i\in \{1,...,d\};\exists y\in K:x=\gamma _{i}(y)\}.$$
  We call the points of $B(\Gamma)$ branched points and the points of $C(\Gamma)$ branched values. And we say that $\Gamma$ satisfies the finite branch condition if $C(\Gamma)$ is finite.
\end{definition}

Then $B(\gamma _{1},...,\gamma _{d})$ is a closed set, because 
\[
B(\gamma _{1},...,\gamma _{d})=\cup _{i\neq j}\{x\in \gamma _{i}(K)\cap \gamma _{j}(K);\gamma
_{i}^{-1}(x)=\gamma _{j}^{-1}(x)\}
\]
and each of the union is clearly closed.

\begin{lemma}\label{lemma:NiceOpenSet}
In the above situation, if $x\in K\backslash B(\gamma _{1},...,\gamma _{d})$%
, then there exists an open neighborhood $U_{x}$ of $x$ satisfying the
following:

\begin{itemize}
\item[(i)] $U_{x}\cap B=\emptyset ;$

\item[(ii)] If $i\in I(x)$, then $\gamma _{j}(\gamma _{i}^{-1}(U_{x}))\cap
U_{x}=\emptyset $ for $j\neq i$;

\item[(iii)] If $i\notin I(x)$, then $U_{x}\cap \gamma _{i}(K)=\emptyset$.
\end{itemize}
\end{lemma}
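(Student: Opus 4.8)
The plan is to build $U_x$ as a finite intersection of open neighborhoods of $x$, treating each of the three requirements separately; conditions (i) and (iii) fall out immediately from compactness/closedness, and essentially all the content lies in (ii).

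First I would handle (i) and (iii). Since $B=B(\gamma_1,\ldots,\gamma_d)$ is closed (as observed just before the lemma) and $x\notin B$, there is an open $V_0\ni x$ with $V_0\cap B=\emptyset$, giving (i). For (iii), note that for each $i\notin I(x)$ we have $x\notin\gamma_i(K)$, and $\gamma_i(K)$ is compact, hence closed, so $K\setminus\gamma_i(K)$ is an open neighborhood of $x$. Intersecting these over the finitely many $i\notin I(x)$ produces an open $V_1\ni x$ with $V_1\cap\gamma_i(K)=\emptyset$ for all $i\notin I(x)$.

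For (ii) I would first record two structural facts. Since the notation $\gamma_i^{-1}$ is in force, each $\gamma_i$ is an injective continuous map of the compact space $K$, hence a homeomorphism onto the compact set $\gamma_i(K)$; consequently $\gamma_i^{-1}\colon\gamma_i(K)\to K$ is continuous and so is the composite $g_{ij}:=\gamma_j\circ\gamma_i^{-1}\colon\gamma_i(K)\to K$. (Throughout, $\gamma_i^{-1}(U)$ is read as $\gamma_i^{-1}(U\cap\gamma_i(K))$, which for $i\in I(x)$ is a relatively open subset of $\gamma_i(K)$ containing $\gamma_i^{-1}(x)$.) The key observation is that for $i\in I(x)$ and $j\neq i$ one has $g_{ij}(x)\neq x$: writing $y=\gamma_i^{-1}(x)$, equality $g_{ij}(x)=\gamma_j(y)=x=\gamma_i(y)$ with $i\neq j$ would force $x\in B$, contradicting the hypothesis. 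Using that $K$ is Hausdorff, I would separate $x$ and $g_{ij}(x)$ by disjoint open sets $W\ni x$ and $W'\ni g_{ij}(x)$ and pull $W'$ back through the continuous map $g_{ij}$; this yields an open neighborhood $U_{ij}\ni x$ with $g_{ij}(U_{ij}\cap\gamma_i(K))\subseteq W'$, and since $U_{ij}\subseteq W$ is disjoint from $W'$ we get $\gamma_j(\gamma_i^{-1}(U_{ij}))\cap U_{ij}=\emptyset$.

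Finally I would set
\[
U_x=V_0\cap V_1\cap\bigcap_{\substack{i\in I(x)\\ j\neq i}}U_{ij},
\]
a finite intersection of open neighborhoods of $x$, hence an open neighborhood of $x$. Passing to this smaller set clearly preserves (i) and (iii), and preserves (ii) since $U_x\subseteq U_{ij}$ implies $\gamma_j(\gamma_i^{-1}(U_x))\subseteq\gamma_j(\gamma_i^{-1}(U_{ij}))$, whose intersection with $U_{ij}\supseteq U_x$ is already empty. The only real obstacle is (ii), and within it the single nontrivial input is the branching dichotomy $g_{ij}(x)\neq x$ coming from $x\notin B$; once that is in hand, everything reduces to a routine continuity-and-finiteness packaging, the only point requiring care being the convention that $\gamma_i^{-1}$ is applied after intersecting with $\gamma_i(K)$.
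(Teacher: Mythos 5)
Your proof is correct. Note that the paper itself offers no proof of this lemma --- it is quoted verbatim from Kajiwara--Watatani (\cite{KW1}, \cite{KW2}) --- and your argument is essentially the standard one found there: the only substantive point is the dichotomy $\gamma_j(\gamma_i^{-1}(x))\neq x$ forced by $x\notin B$, after which Hausdorff separation, continuity of $\gamma_j\circ\gamma_i^{-1}$ on the compact set $\gamma_i(K)$, and a finite intersection finish the job. Your explicit remark that the notation $\gamma_i^{-1}$ presupposes injectivity of the $\gamma_i$ is well taken, since the paper's standing hypothesis is only that the maps are contractions; injectivity (e.g.\ via the proper contraction hypothesis of \cite{KW1}) is indeed being tacitly assumed.
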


\begin{lemma}\label{lemma:FinBranchIdeal}
If $\Gamma$ satisfies the finite branch condition or the open set condition then $J_E=\{a\in A=C(K);a~\mathrm{vanishes~on~}B(\gamma
_{1},...,\gamma _{d})\}$ where $J_E=\phi^{-1}(\mathcal{K}(E))$ as in the previous subsection.
\end{lemma}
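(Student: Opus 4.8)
The plan is to exploit that $J_E$, being the preimage under the faithful unital homomorphism $\phi$ of the ideal $\mathcal{K}(E)$ of $\mathcal{L}(E)$, is an ideal of $A=C(K)$, and hence is of the form $\{a: a|_Z=0\}$ for a unique closed set $Z\subseteq K$; the lemma then amounts to showing $Z=B$, where I abbreviate $B=B(\gamma_1,\ldots,\gamma_d)$. I would prove the two inclusions separately: first that every $a$ vanishing on $B$ lies in $J_E$ (equivalently $Z\subseteq B$), and then, using the hypotheses, that every $a\in J_E$ vanishes on $B$ (equivalently $B\subseteq Z$).

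For the inclusion $\{a:a|_B=0\}\subseteq J_E$ I would first reduce to the case where $a$ is supported in a compact subset of the open set $K\setminus B$: since $a|_B=0$ and $B$ is closed, each set $\{|a|\geq\varepsilon\}$ is compact and disjoint from $B$, so Urysohn's lemma produces such approximants, and it suffices to treat them because $\mathcal{K}(E)$ is closed and $\phi$ is bounded. For such an $a$, cover $\mathrm{supp}(a)$ by finitely many neighborhoods $U_x$ from Lemma~\ref{lemma:NiceOpenSet} and take a subordinate partition of unity, reducing further to $a$ supported in a single $U=U_{x_0}$. Over $U$, conditions (i)--(iii) of Lemma~\ref{lemma:NiceOpenSet} say that the part of $\mathcal{G}$ lying over $U$ (in the first coordinate) is the disjoint union of the sheets $S_i=\{(x,y)\in\mathcal{G}:x\in U,\ x=\gamma_i(y)\}$ for $i\in I(x_0)$, these sheets having disjoint second-coordinate projections. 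I would then set $\xi_i(x,y)=a(x)\chi_{S_i}(x,y)$ and $\eta_i(x,y)=g(x)\chi_{S_i}(x,y)$, where $g\in C_c(U)$ equals $1$ on $\mathrm{supp}(a)$ (both continuous on $\mathcal{G}$ since each $S_i$ is relatively clopen over $\mathrm{supp}(a)$), and check by a direct computation with the inner product that $\sum_{i\in I(x_0)}\theta_{\xi_i,\eta_i}=\phi(a)$, using that by (i) no point of $\mathcal{G}$ over $U$ lies on two sheets. This exhibits $\phi(a)$ as a finite sum of rank-one operators, so $\phi(a)\in\mathcal{K}(E)$.

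The reverse inclusion $J_E\subseteq\{a:a|_B=0\}$ is the heart of the matter, and is where the finite branch or open set condition is used. Arguing by contraposition, suppose $a(x_0)\neq0$ for some $x_0=\gamma_i(y_0)=\gamma_j(y_0)\in B$ with $i\neq j$; I will show $\phi(a)\notin\mathcal{K}(E)$ by producing a bounded sequence $\zeta^{(n)}\in E$ annihilated in the limit by every compact operator but not by $\phi(a)$. Concretely, I would choose non-branch parameters $y_n\to y_0$ (so that near $y_n$ all sheets of $\mathcal{G}$ are separated) together with bump functions $f_n\in C(K)$, $\|f_n\|_\infty=1$, whose supports shrink to $\{y_n\}$, and let $\zeta^{(n)}$ equal $+f_n$ on sheet $i$, $-f_n$ on sheet $j$, and $0$ elsewhere. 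A direct inner-product computation gives $\langle\zeta^{(n)},\zeta^{(n)}\rangle(y)=2|f_n(y)|^2$, so $\|\zeta^{(n)}\|=\sqrt{2}$, while $\langle\phi(a)\zeta^{(n)},\phi(a)\zeta^{(n)}\rangle(y)=\big(|a(\gamma_i(y))|^2+|a(\gamma_j(y))|^2\big)|f_n(y)|^2$, whence $\|\phi(a)\zeta^{(n)}\|\to\sqrt{2}\,|a(x_0)|\neq0$ because $\gamma_i(y_n),\gamma_j(y_n)\to x_0$. On the other hand, for any $\theta_{\xi,\eta}$ one has $\langle\eta,\zeta^{(n)}\rangle(y)=f_n(y)\big(\overline{\eta(\gamma_i(y),y)}-\overline{\eta(\gamma_j(y),y)}\big)$; since $\eta$ is continuous on the compact set $\mathcal{G}$ and both arguments $(\gamma_i(y),y),(\gamma_j(y),y)$ converge to the single point $(x_0,y_0)$ as the supports shrink, uniform continuity forces $\|\langle\eta,\zeta^{(n)}\rangle\|_\infty\to0$, hence $\|\theta_{\xi,\eta}\zeta^{(n)}\|\leq\|\xi\|\,\|\langle\eta,\zeta^{(n)}\rangle\|_\infty\to0$. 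By linearity and density this gives $\|T\zeta^{(n)}\|\to0$ for every $T\in\mathcal{K}(E)$, contradicting $\phi(a)\in\mathcal{K}(E)$.

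The step I expect to require the most care---and the only place the hypotheses enter---is the availability of the approximating non-branch parameters $y_n\to y_0$ with neighborhoods on which all sheets are separated; this is exactly what guarantees that $\zeta^{(n)}$ is a well-defined continuous section with $\|\zeta^{(n)}\|$ bounded below. Here I would use the finite branch condition (which makes $C(\gamma_1,\ldots,\gamma_d)$ finite, hence $y_0$ an isolated branch value) or the open set condition (from which I would first deduce, using the self-similarity of $K$, that $C(\gamma_1,\ldots,\gamma_d)$ is closed and nowhere dense) to conclude that $K\setminus C(\gamma_1,\ldots,\gamma_d)$ is dense, so such $y_n$ exist. This is precisely the hypothesis that rules out the degenerate situation $\gamma_i\equiv\gamma_j$ on an open set, in which $\phi(a)$ would be compact for every $a$ and the conclusion would fail; verifying nowhere-density of the branch values under the open set condition is the main obstacle I anticipate.
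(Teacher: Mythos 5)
Your proposal is essentially a correct reconstruction of the proof, but be aware that the paper does not prove this lemma at all: it is recalled from \cite{KW1} and \cite{KW2}, and the only fragment the paper reproduces is Remark~\ref{rem:CompactInvIm}, which is exactly your forward inclusion in a slightly leaner form (take $\xi_k(x,y)=a(x)\sqrt{\varphi_k(x)}$ and $\eta_k(x,y)=\sqrt{\varphi_k(x)}$; the characteristic functions of the sheets are unnecessary, since properties (ii)--(iii) of Lemma~\ref{lemma:NiceOpenSet} already collapse the sum defining $\langle\eta_k,\zeta\rangle(y)$ to the single index $i_0$ with $x=\gamma_{i_0}(y)$). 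Your reverse inclusion --- the oscillating sections $\zeta^{(n)}$ equal to $\pm f_n$ on two sheets meeting at a branched point, with the uniform-continuity estimate killing $\theta_{\xi,\eta}\zeta^{(n)}$ while $\|\phi(a)\zeta^{(n)}\|$ stays bounded below, and the passage to all of $\mathcal{K}(E)$ using $\sup_n\|\zeta^{(n)}\|<\infty$ --- is the argument of \cite{KW1} and all the computations check out.

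Two points need patching. First, the step you flag as the main obstacle (nowhere-density of $C(\gamma_1,\ldots,\gamma_d)$ under the open set condition) is actually immediate, but only from the correct statement of that condition: in \cite{KW1} one requires an open dense $V$ with $\gamma_i(V)\subseteq V$ \emph{and} $\gamma_i(V)\cap\gamma_j(V)=\emptyset$ for $i\neq j$; the disjointness clause is dropped in this paper's definition, and without it neither your claim nor the lemma is true (take $\gamma_i=\gamma_j$). With disjointness, any $y\in V$ satisfies $\gamma_i(y)\neq\gamma_j(y)$, so $C(\gamma_1,\ldots,\gamma_d)\subseteq K\setminus V$ is closed with empty interior, and your approximating parameters $y_n$ exist. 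Second, in the finite branch case, finiteness of $C(\gamma_1,\ldots,\gamma_d)$ only yields $y_n\to y_0$ with $y_n\notin C(\gamma_1,\ldots,\gamma_d)$ if $y_0$ is not an isolated point of $K$; this is where the standing (implicit in \cite{KW1}) hypothesis that the $\gamma_i$ are injective proper contractions with $d\geq 2$ enters, since it forces $K$ to be perfect. In the genuinely degenerate case (e.g.\ $K$ a single common fixed point) the statement of the lemma fails, so some such non-degeneracy assumption must be invoked. With these two sentences added, your proof is complete and is the intended one.
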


\begin{remark}\label{rem:CompactInvIm}
In the following proof, we will need an explicit description of $\phi(a)$ for certain elements in $J_E$. We do as in \cite{KW1}. Let $B=B(\gamma _{1},...,\gamma _{d})$ and take $a\in A$ such that $Y:=\mathrm{supp}(a)\subseteq K\backslash B$. Clearly $a\in J_E$.

For each $x\in Y$ choose an open neighborhood $U_x$ as in lemma \ref{lemma:NiceOpenSet}. Since $Y$ is compact, there exists a finite set $\{x_1,\ldots,x_m\}$ such that $Y\subseteq \cup_{k=1}^m U_{x_k}$. Let $U_k=U_{x_k}$ for $k=1,\ldots,m$ and $U_{m+1}=K\backslash Y$, then $\{U_k\}_{k=1}^{m+1}$ is an open cover of $K$. Let $\{\varphi_k\}_{k=1}^{m+1}\subseteq C(K)$ be a partition of unity subordinate to this open cover. Define $\xi_k,\eta_k\in C(\mathcal{G})$ by $\xi_k(x,y)=a(x)\sqrt{\varphi_k(x)}$ and $\eta_k(x,y)=\sqrt{\varphi_k(x)}$ then $\phi(a)=\sum_{k=1}^{m}{\theta_{\xi_k,\eta_k}}$ (the summation goes to $m$ only because $\xi_{m+1}=0$).
\end{remark}

\begin{remark}
Because of the lemma \ref{lemma:FinBranchIdeal}, our results will need that the IFS satisfies the finite branch condition or the open set condition, but we note that these conditions are independent.
\end{remark}

\begin{proposition}
\label{PropCovRep}If the IFS $\Gamma$ satisfies the finite branch condition or the open set condition, then the pair $(\iota ,\psi )$ defined by equations (\ref{eq:iota}) and (\ref{eq:psi}) is a Cuntz-Pimsner
covariant representation of $(A,E)$ in $\mathcal{O}_d$.
\end{proposition}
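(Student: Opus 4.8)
The plan is to verify the three defining conditions (i)--(iii) of Definition \ref{def:PimsnerCovRep} for the pair $(\iota,\psi)$. Before that, I would record two structural observations. First, $\iota$ is a $*$-homomorphism, since it factors as $a\mapsto a\circ F$ followed by the canonical inclusion $C(\Omega)\hookrightarrow C^*(G)=\mathcal{O}_d$, $h\mapsto h$, of subsection \ref{sec:CuntzAlg}, which realizes $C(\Omega)$ as the functions supported on the unit space. Second, writing $g(\omega)=\xi(F(\omega),F(\sigma(\omega)))\in C(\Omega)$ and regarding $g$ in $C_c(G)$ as in subsection \ref{sec:CuntzAlg}, one checks directly that $\psi(\xi)=g*S$; this exhibits $\psi(\xi)$ as a function supported on the degree-$1$ part of $G$ and makes $\psi$ visibly linear. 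Conditions (i) and (ii) are then direct computations with the convolution product, and condition (iii) is the substantive point.

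For (i), I would evaluate both sides at a groupoid element $(\omega,m-n,\tau)$. Because $\iota(a)$ and $\iota(b)$ are supported on the unit space, convolving $\psi(\xi)$ on the left by $\iota(a)$ and on the right by $\iota(b)$ merely inserts the scalar factors $a(F(\omega))$ and $b(F(\tau))$, reproducing exactly $\psi(\phi(a)\xi b)(\omega,m-n,\tau)=[m-n=1][\sigma(\omega)=\tau]a(F(\omega))\xi(F(\omega),F(\tau))b(F(\tau))$. For (ii), compute $\psi(\xi)^{*}*\psi(\eta)$: matching degrees $-1$ and $+1$ and the source/range constraints $\sigma(\nu)=\omega$, $\sigma(\nu)=\tau$ force $\omega=\tau$, so the product is automatically supported on the unit space. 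At $(\omega,0,\omega)$ the convolution runs over the $d$ preimages $\nu=\sigma_i(\omega)$ of $\omega$ under $\sigma$, and using $F\circ\sigma_i=\gamma_i\circ F$ from Proposition \ref{prop:CodeMap} the $i$-th term is $\overline{\xi(\gamma_i(F(\omega)),F(\omega))}\,\eta(\gamma_i(F(\omega)),F(\omega))$; summing gives $\langle\xi,\eta\rangle(F(\omega))=\iota(\langle\xi,\eta\rangle)(\omega,0,\omega)$, which is (ii).

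The crux is (iii): for $c\in J_E$ one must show $\sum_k\psi(\xi_k)\psi(\eta_k)^{*}=\iota(c)$ whenever $\phi(c)=\sum_k\theta_{\xi_k,\eta_k}$. Once (i) and (ii) hold, $(\psi,\iota)^{(1)}$ extends to a $*$-homomorphism on $\mathcal{K}(E)$ and is therefore continuous, and $\iota$ is continuous; so by Lemma \ref{lemma:FinBranchIdeal}, which identifies $J_E$ with the functions vanishing on the branched-point set $B=B(\gamma_1,\dots,\gamma_d)$, it suffices to treat $c$ with $\mathrm{supp}(c)\subseteq K\setminus B$ compact, these being dense in $J_E$. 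For such $c$, Remark \ref{rem:CompactInvIm} supplies the explicit decomposition $\phi(c)=\sum_{k=1}^m\theta_{\xi_k,\eta_k}$ with $\xi_k(x,y)=c(x)\sqrt{\varphi_k(x)}$ and $\eta_k(x,y)=\sqrt{\varphi_k(x)}$ for a partition of unity $\{\varphi_k\}$ subordinate to the cover from Lemma \ref{lemma:NiceOpenSet}. Convolving, I expect to find that $\sum_k\psi(\xi_k)\psi(\eta_k)^{*}$ is supported where $\sigma(\omega)=\sigma(\tau)$ and equals there $c(F(\omega))\sum_k\sqrt{\varphi_k(F(\omega))}\sqrt{\varphi_k(F(\tau))}$.

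It then remains to match this against $\iota(c)$, which is where the real work lies. On the diagonal $\omega=\tau$ the sum becomes $c(F(\omega))\sum_k\varphi_k(F(\omega))$, equal to $c(F(\omega))$ since the $\varphi_k$ sum to $1$ on $\mathrm{supp}(c)$; this reproduces $\iota(c)(\omega,0,\omega)$. Off the diagonal one has $\omega\neq\tau$ with $\sigma(\omega)=\sigma(\tau)$, so $\omega_0=i\neq j=\tau_0$ and, setting $y=F(\sigma(\omega))$, $F(\omega)=\gamma_i(y)$ and $F(\tau)=\gamma_j(y)$. Here I would argue that each term $\sqrt{\varphi_k(\gamma_i(y))}\sqrt{\varphi_k(\gamma_j(y))}$ vanishes: if it did not, then $\gamma_i(y),\gamma_j(y)\in U_{x_k}$, so property (iii) of Lemma \ref{lemma:NiceOpenSet} applied to $\gamma_i(y)\in U_{x_k}\cap\gamma_i(K)$ forces $i\in I(x_k)$, and then property (ii) gives $\gamma_j(\gamma_i^{-1}(U_{x_k}))\cap U_{x_k}=\emptyset$, contradicting $\gamma_j(y)\in U_{x_k}$. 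Hence the off-diagonal contribution is $0$, matching $\iota(c)$, and (iii) follows. The separation step of this last paragraph --- converting the geometric properties of the neighborhoods $U_{x}$ into the algebraic vanishing identity --- is the main obstacle; the remaining verifications are bookkeeping with the groupoid convolution together with the density reduction to compactly supported $c$ off $B$.
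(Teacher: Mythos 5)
Your proposal is correct and follows essentially the same route as the paper: the same convolution computations for conditions (i) and (ii), the same reduction via density to elements supported off $B$, the same explicit decomposition $\phi(a)=\sum_k\theta_{\xi_k,\eta_k}$ from Remark \ref{rem:CompactInvIm}, and the same use of properties (ii) and (iii) of Lemma \ref{lemma:NiceOpenSet} to kill the off-diagonal terms. The only additions are minor structural remarks (e.g.\ $\psi(\xi)=g*S$ and the explicit continuity justification for the density step) that the paper leaves implicit.
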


\begin{proof}
Most calculations are very similar so we only show some of them. Let 
$a\in A$ and $\xi\in E$. We have%
\begin{equation}
\psi (a\xi)(\omega,m-n,\tau)=[m-n=1][\sigma(\omega) =\tau]a(F(\omega))\xi(F(\omega),F(\tau))  \label{CovRep01}
\end{equation}%
On the other hand%
$$(\iota (a)\ast \psi (\xi))(\omega,m-n,\tau)=\sum \iota (a)(\omega,k-l,\nu)\psi (\xi)(\nu,(m+l)-(n+k),\tau)=$$
\begin{equation}
a(F(\omega))\psi (\xi)(\omega,m-n,\tau)
\label{CovRep02}
\end{equation}%
where the second equality is true due to the fact that $\iota (a)$ is zero
unless $k=l$ and $\omega=\nu$. We can easily see then that (\ref{CovRep01}) and (\ref{CovRep02}) coincide.

For the $A$-valued scalar product, let $\xi,\eta\in E$. Then%
\[
\iota (\left\langle \xi,\eta\right\rangle _{A})(\omega,m-n,\tau%
)=[m=n][\omega=\tau]\left\langle \xi,\eta\right\rangle _{A}(F(\omega))= 
\]%
\begin{equation}\label{eq:CovRep03}
[m=n][\omega=\tau]\sum_{i=1}^{d}\overline{\xi(\gamma
_{i}(F(\omega)),F(\omega))}\eta(\gamma _{i}(F(\omega)),F(\omega)) 
\end{equation}
and on the other hand%
\[
(\psi (\xi)^{\ast }\ast \psi (\eta))(\omega,m-n,\tau)=\sum \psi
(\xi)^{\ast }(\omega,k-l,\nu)\psi (g)(\nu,(m+l)-(n+k),\eta)= 
\]%
\[
\sum \overline{\psi (\xi)(\nu,l-k,\omega)}\psi (g)(\nu,(m+l)-(n+k),\tau)= 
\]%
\begin{equation}\label{eq:CovRep04}
[m=n][\omega=\tau]\sum_{\sigma(\nu)=\omega}\overline{\xi(F(\nu),F(\omega))}\eta(F(\nu),F(\omega)) 
\end{equation}
and we note that $\sigma(\nu)=\omega$ iff $\nu=\sigma _{i}(\omega)$ for some $i=1,\ldots ,d$ and in this case $F(\nu)=\gamma _{i}(F(\omega))$. It follows that we can rewrite (\ref{eq:CovRep04}) as (\ref{eq:CovRep03}).

Finally, we have to show that $(\psi ,\iota )^{(1)}(\phi (a))=\iota (a)$ for 
$a\in J_E$ where $J_E=\left\{ a\in A:a|_{B(\gamma _{1},\ldots ,\gamma
_{d})}=0\right\} $ by lemma \ref{lemma:FinBranchIdeal}. We take $a\in J_E$ such that $Y:=\mathrm{supp}(a)\subseteq K\backslash B$ and $\xi
_k$ and $\eta _k$ as in remark \ref{rem:CompactInvIm}, then%
\[
(\psi ,\iota )^{(1)}(\phi (a))(\omega,m-n,\tau)=\sum_k(\psi (\xi _k)\ast \psi (\eta _k)^{\ast })(\omega%
,m-n,\tau)= 
\]%
\[
\sum_k\sum \psi (\xi _k)(\omega,k-l,\nu%
)\psi (\eta _k)^{\ast }(\nu,(m+l)-(n+k),\tau)= 
\]%
\[
[m=n][\sigma(\omega)=\sigma(\tau)]\sum_k\xi _k(F(\omega),F(%
\sigma(\omega)))\eta _k(F(\tau),F(\sigma(\omega)))= 
\]%
\begin{equation}
[m=n][\sigma(\omega)=\sigma(\tau)]\sum_ka(F(\omega))\sqrt{\varphi
_k(F(\omega))}\sqrt{\varphi _k(F(\tau))}.  \label{eq:CovRep05}
\end{equation}
Note that $F(\omega)=\gamma _{\omega_{0}}(F(\sigma(\omega)))$ and $\sigma(\omega)=%
\sigma(\tau)$ implies that $F(\tau)=\gamma _{\tau_{0}}(F(\sigma(\omega)))$ where
$\omega_{0},\tau_{0}$ are the coordinates zero of $\omega$ and $\tau$ respectively. Now if $F(\omega)\in
U_{x_k}$ then $\omega_{0}\in I(x_k)$ because of property (iii) of
lemma \ref{lemma:NiceOpenSet} and $F(\sigma(\omega)\in \gamma
_{\omega_{0}}^{-1}(U_{x_k})$. We have that $F(\tau)\in
\gamma _{\tau_{0}}(\gamma _{\omega_{0}}^{-1}(U_{x_k}))$ and if $\omega_{0}\neq
\tau_{0}$ then by property (ii) of lemma \ref{lemma:NiceOpenSet}, we have $F(\tau%
)\notin U_{x_k}$. Since the support of $\varphi _k$ is
contained in $U_{x_k}$ and if $\omega_{0}=\tau_{0}$ then $\omega=%
\tau$, we have from (\ref{eq:CovRep05}) that%
\[
(\psi ,\iota )^{(1)}(\phi (a))(\omega,m-n,\tau)=[m=n][%
\omega=\tau]a(F(\omega))= 
\]%
\[
\iota (a)(\omega,m-n,\tau). 
\]%
As the elements $a\in C(K)$ such that $\mathrm{supp}(a)\subseteq K\backslash B$ are dense in $J_E$, the equality $(\psi ,\iota )^{(1)}(\phi(a))=\iota (a)$ holds for an arbitrary $a\in J_E$.
\end{proof}

\begin{lemma}[\cite{FMR}]\label{lemma:GaugeInvariance}
Suppose that $(\psi ,\iota )$ is an isometric covariant representation of $E$
into a C*-algebra $B$. Then $\psi \times \iota $ is faithful if and only if $%
\iota $ is faithful and there is a (strongly continuous) action $\beta :%
\mathbb{T}\rightarrow \mathrm{Aut}(B)$ such that $\beta _{z}\circ \iota
=\iota $ and $\beta _{z}\circ \psi =z\psi $ for all $z\in \mathbb{T}$.
\end{lemma}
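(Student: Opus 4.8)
The plan is to write $\pi:=\psi\times\iota\colon\mathcal{O}(E)\to B$ for the homomorphism furnished by the universal property, and to play it off against the canonical gauge action $\gamma\colon\mathbb{T}\to\mathrm{Aut}(\mathcal{O}(E))$ determined on generators by $\gamma_z(k_A(a))=k_A(a)$ and $\gamma_z(k_E(\xi))=z\,k_E(\xi)$. This action exists because, for each $z\in\mathbb{T}$, the rescaled pair $(k_A,z\,k_E)$ still satisfies (i)--(iii) of Definition \ref{def:PimsnerCovRep} (the phase $z$ cancels in conditions (ii) and (iii)), so it induces an endomorphism $\gamma_z$; one checks $\gamma_z\gamma_w=\gamma_{zw}$ and $\gamma_1=\mathrm{id}$, while strong continuity follows from continuity on the dense $*$-subalgebra generated by $k_A(A)$ and $k_E(E)$ together with the fact that each $\gamma_z$ is isometric. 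Throughout I take $B=C^{*}(\iota(A)\cup\psi(E))$, so that ``$\pi$ faithful'' means ``$\pi$ an isomorphism''. The forward implication is then direct: if $\pi$ is faithful, then $\iota=\pi\circ k_A$ is faithful since $k_A$ is injective whenever $\phi$ is faithful (Proposition \ref{prop:BasicResultsCorr}), and transporting the action by $\beta_z:=\pi\circ\gamma_z\circ\pi^{-1}$ gives an action on $B$ with $\beta_z\circ\iota=\iota$ and $\beta_z\circ\psi=z\psi$, both identities being immediate on the generators.

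The substance is the reverse implication, where the key tool is averaging. Assuming $\iota$ faithful and a gauge action $\beta$ on $B$, equivariance $\beta_z\circ\pi=\pi\circ\gamma_z$ holds on the generators and hence everywhere. Integrating over $\mathbb{T}$ against normalized Haar measure produces conditional expectations
\[
\Phi(x)=\int_{\mathbb{T}}\gamma_z(x)\,dz,\qquad \Psi(y)=\int_{\mathbb{T}}\beta_z(y)\,dy,
\]
onto the fixed-point algebras $\mathcal{O}(E)^{\gamma}$ and $B^{\beta}$; both are faithful (because the integrand $\gamma_z(x^{*}x)$ is positive), and equivariance yields the intertwining $\Psi\circ\pi=\pi\circ\Phi$. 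This reduces the whole problem to injectivity of $\pi$ on the core: if $\pi(x)=0$, then $\pi(\Phi(x^{*}x))=\Psi(\pi(x^{*}x))=0$, so once $\pi|_{\mathcal{O}(E)^{\gamma}}$ is known to be injective we obtain $\Phi(x^{*}x)=0$, and faithfulness of $\Phi$ forces $x=0$.

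Thus everything collapses to showing that $\pi$ is injective on the fixed-point algebra $\mathcal{O}(E)^{\gamma}$, and this is the step I expect to be the main obstacle. The core is the closure of the increasing union of finite-level subalgebras $\mathcal{B}_n$ generated by the balanced products $k_E(\xi_1)\cdots k_E(\xi_p)\,k_E(\eta_p)^{*}\cdots k_E(\eta_1)^{*}$ with $p\le n$; each $\mathcal{B}_n$ is describable in terms of $\iota(A)$ and the algebras of compacts $\mathcal{K}(E^{\otimes j})$ for $j\le n$, linked by the covariance relation (iii), which identifies $(\psi,\iota)^{(1)}$ on $\mathcal{K}(E^{\otimes n})$ with $\iota$ on the ideal $J_E$. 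I would prove injectivity of $\pi$ on each $\mathcal{B}_n$ by induction on $n$: the base case $\mathcal{B}_0=\iota(A)$ is exactly the hypothesis that $\iota$ is faithful, and the inductive step uses (iii) together with faithfulness of $\phi$ to show that an element of $\mathcal{B}_n$ killed by $\pi$ must have its top-level part vanish and hence already lie in $\mathcal{B}_{n-1}$, where the inductive hypothesis applies. Passing to the inductive limit then gives injectivity on all of $\mathcal{O}(E)^{\gamma}$ and completes the proof.
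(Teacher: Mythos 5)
You should first note that the paper itself gives no proof of this lemma: it is quoted verbatim from Fowler--Muhly--Raeburn \cite{FMR} and used as a black box (indeed only the ``if'' direction is ever applied, in the two faithfulness propositions). So the comparison here is between your sketch and the standard proof in the literature. Your scaffolding matches that proof exactly and is correct: the existence of the canonical gauge action $\gamma$ on $\mathcal{O}(E)$ via the universal property applied to $(k_A,zk_E)$, the forward direction by transporting $\gamma$ through $\pi$ (your convention $B=C^{*}(\iota(A)\cup\psi(E))$ is the right reading of the statement, since otherwise an action on all of $B$ need not exist), the faithful conditional expectations $\Phi$, $\Psi$ obtained by averaging, the intertwining $\Psi\circ\pi=\pi\circ\Phi$, and the reduction of injectivity of $\pi$ to injectivity on the fixed-point algebra are all standard and correctly executed.

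The genuine gap is in the one step you yourself flag as the obstacle, and the mechanism you propose for it does not work as stated. In the Cuntz--Pimsner algebra the sum $\mathcal{B}_n=k_A(A)+(k_E,k_A)^{(1)}(\mathcal{K}(E))+\cdots+(k_E,k_A)^{(n)}(\mathcal{K}(E^{\otimes n}))$ is \emph{not} direct: the covariance relation (iii) identifies $k_A(a)$ with $(k_E,k_A)^{(1)}(\phi(a))$ for every $a\in J_E$, and similarly identifies each level with part of the next. Consequently ``the top-level part'' of an element of $\mathcal{B}_n$ is not well defined, and the assertion that an element of $\ker\pi\cap\mathcal{B}_n$ has vanishing top-level part and therefore lies in $\mathcal{B}_{n-1}$ has no meaning without further work. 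The actual argument in \cite{FMR} requires computing the intersection $\mathcal{B}_{n-1}\cap(k_E,k_A)^{(n)}(\mathcal{K}(E^{\otimes n}))$ precisely (it is the image of $\mathcal{K}(E^{\otimes n-1}J_E\cdot E)$, roughly speaking), showing that $(k_E,k_A)^{(n)}(\mathcal{K}(E^{\otimes n}))$ sits as an ideal in $\mathcal{B}_n$, proving that a representation with $\iota$ faithful is automatically faithful on each $\mathcal{K}(E^{\otimes n})$ (this uses faithfulness of $\phi$ and is where ``isometric covariant representation'' earns its keep), and then running the induction through the resulting ideal decomposition. That computation is the substantive content of the gauge-invariant uniqueness theorem; as written, your proof assumes it rather than supplies it.
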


\begin{proposition}
If the IFS $\Gamma$ satisfies the finite branch condition or the open set condition, then the homomorphism $\psi \times \iota $ given by the covariant representation
defined by (\ref{eq:iota}) and (\ref{eq:psi}) is faithful.
\end{proposition}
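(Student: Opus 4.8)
The natural strategy is to invoke the gauge-invariant uniqueness theorem, Lemma~\ref{lemma:GaugeInvariance}. By Proposition~\ref{PropCovRep} the pair $(\psi,\iota)$ is a Cuntz-Pimsner covariant representation of $(A,E)$ in $\mathcal{O}_d$, and the isometric hypothesis of Lemma~\ref{lemma:GaugeInvariance} is satisfied because $(\psi,\iota)$ obeys the identity $\psi(\xi)^{\ast}\psi(\eta)=\iota(\langle\xi,\eta\rangle)$ of Definition~\ref{def:PimsnerCovRep}(ii). Thus the proof reduces to the two remaining conditions in that lemma: (1) that $\iota$ is faithful, and (2) that there is a strongly continuous action $\beta:\mathbb{T}\to\mathrm{Aut}(\mathcal{O}_d)$ with $\beta_z\circ\iota=\iota$ and $\beta_z\circ\psi=z\psi$ for all $z\in\mathbb{T}$.

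First I would prove that $\iota$ is faithful. Recall from Proposition~\ref{prop:CodeMap} that the code map $F:\Omega\to K$ is a continuous surjection, so the pullback $a\mapsto a\circ F$ is an injective unital $\ast$-homomorphism from $A=C(K)$ into $C(\Omega)$. By formula~(\ref{eq:iota}), $\iota(a)$ is precisely the image of $a\circ F$ under the canonical embedding $C(\Omega)\hookrightarrow C^{\ast}(G)=\mathcal{O}_d$ that sends $h$ to the diagonal function $h(\omega,m-n,\tau)=[m=n][\omega=\tau]h(\omega)$ described in subsection~\ref{sec:CuntzAlg}. Since $\Omega$ is the unit space of the \'etale groupoid $G$, this diagonal embedding is isometric, hence injective; composing two injective maps makes $\iota$ injective.

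Next I would build the gauge action from the natural grading of $G$. The map $c:G\to\mathbb{Z}$ given by $c(\omega,m-n,\tau)=m-n$ is a continuous groupoid homomorphism (it is locally constant on each basic set $B(U,V,m,n)$), so for each $z\in\mathbb{T}$ the formula $(\beta_z p)(g)=z^{c(g)}p(g)$ defines a $\ast$-automorphism of $C_c(G)$; by the groupoid machinery of \cite{R1} this extends to an automorphism of the completion $\mathcal{O}_d$, and $z\mapsto\beta_z$ is strongly continuous. Because $\iota(a)$ is supported on $\{c=0\}$ and $\psi(\xi)$ on $\{c=1\}$, equations~(\ref{eq:iota}) and~(\ref{eq:psi}) give immediately $\beta_z\circ\iota=\iota$ and $\beta_z\circ\psi=z\psi$.

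With $\iota$ faithful and the gauge action in hand, Lemma~\ref{lemma:GaugeInvariance} yields that $\psi\times\iota$ is faithful, completing the proof. The verifications of the covariance identities for $\beta$ and of the injectivity of the diagonal embedding are short; I expect the only real obstacle to be the technical point of justifying that $\beta_z$ passes from $C_c(G)$ to a genuine strongly continuous automorphism of $\mathcal{O}_d$ with respect to the (black-boxed) norm of \cite{R1}. Since this is exactly the standard dual/gauge action attached to a continuous $\mathbb{Z}$-cocycle on an \'etale groupoid, it can be cited rather than reproved.
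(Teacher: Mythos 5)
Your proposal is correct and follows essentially the same route as the paper: the paper also verifies faithfulness of $\iota$ via the surjectivity of the code map $F$ (by an explicit computation of $\iota(a)^{\ast}\ast\iota(a)$ rather than your appeal to the injectivity of the diagonal embedding, but the content is the same) and then invokes Lemma~\ref{lemma:GaugeInvariance} with exactly the gauge action $\beta_z(f)(\omega,m-n,\tau)=z^{m-n}f(\omega,m-n,\tau)$ you describe. No substantive differences.
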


\begin{proof}
Given $a\in C(K)$,%
\[
(\iota (a)^{\ast }\ast \iota (a))(\omega,m-n,\tau)=[m=n][\omega=\tau]|a(F(\omega))|^{2} 
\]%
and as $F$ is surjective, we have that $\iota $ is faithful. Let $\beta :\mathbb{T}%
\rightarrow \mathrm{Aut}(C^{\ast }(G))$ be the gauge action given by%
\[
\beta _{z}(f)(\omega,m-n,\tau)=z^{m-n}f(\omega,m-n,\tau) 
\]%
then $\beta _{z}(\iota (a))=\iota (a)$ because $\iota (a)$ is zero for $%
m\neq n$; and for $\xi\in E$, $\beta _{z}(\psi (\xi))=z\psi (\xi)$ because $\psi (\xi)$ is zero
for $m-n\neq 1$.
\end{proof}

We conclude with this proposition that $\mathcal{O}_{\Gamma }$ is a
subalgebra of $\mathcal{O}_{d}$. 

\begin{remark}
As $\mathcal{G}$ is a closed, and therefore compact, subset of $K\times K$,
all continuous functions in $\mathcal{G}$ can be seen as restrictions of
continuous functions in $K\times K$. And viewing $C(K\times K)=C(K)\otimes
C(K)$, we have that every continuous function in $\mathcal{G}$ can be written as a
limit of sums of elements of the type $a\otimes b$ where $a,b\in C(K)$. We can
do this both with respect to the sup norm and to the Hilbert-module norm because
of proposition \ref{prop:BasicResultsCorr}.
\end{remark}

We also note that the code map $F:\Omega \rightarrow K$ defined in
proposition \ref{prop:CodeMap} induces an injection of $C(K)$ in $%
C(\Omega )$.

\begin{proposition}
If the IFS $\Gamma$ satisfies the finite branch condition or the open set condition, then $\mathcal{O}_{\Gamma }$ is the sub-C*-algebra of $\mathcal{O}_{d}$ generated
by $C(K)$ and $S$.
\end{proposition}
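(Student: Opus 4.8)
The plan is to identify $\mathcal{O}_{\Gamma}$ with its image $(\psi\times\iota)(\mathcal{O}_{\Gamma})$ inside $\mathcal{O}_d=C^{\ast}(G)$, which is legitimate because the previous proposition shows $\psi\times\iota$ is faithful. By the universal property of the Cuntz-Pimsner algebra, $\mathcal{O}_{\Gamma}$ is generated by $k_A(A)$ and $k_E(E)$, so its image is the C*-subalgebra of $\mathcal{O}_d$ generated by $\iota(C(K))$ and $\psi(E)$. The whole task then reduces to showing that this subalgebra coincides with the one generated by the canonical copy of $C(K)$ and the isometry $S$.

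First I would pin down the two generating sets explicitly. Comparing (\ref{eq:iota}) with the embedding $C(K)\hookrightarrow C(\Omega)\hookrightarrow C^{\ast}(G)$ induced by the code map $F$, one sees at once that $\iota(a)$ is nothing but $a\circ F$ regarded as an element of $C^{\ast}(G)$; hence $\iota(C(K))$ \emph{is} the copy of $C(K)$ sitting inside $\mathcal{O}_d$. Next, taking the constant function $\mathbf{1}\in E=C(\mathcal{G})$ (continuous since $\mathcal{G}$ is compact) and comparing (\ref{eq:psi}) with the defining formula $S(\omega,m-n,\tau)=[m-n=1][\sigma(\omega)=\tau]$, one gets $\psi(\mathbf{1})=S$. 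This already yields one inclusion: since both $\iota(C(K))$ and $S=\psi(\mathbf{1})$ lie in $(\psi\times\iota)(\mathcal{O}_{\Gamma})$, the C*-algebra generated by $C(K)$ and $S$ is contained in $\mathcal{O}_{\Gamma}$.

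For the reverse inclusion I must place all of $\psi(E)$ inside $C^{\ast}(C(K),S)$. The key computation is the covariance relation (i) of Definition \ref{def:PimsnerCovRep}: for $a,b\in C(K)$ the function $\phi(a)\,\mathbf{1}\,b\in E$ is exactly the restriction to $\mathcal{G}$ of the elementary tensor $(x,y)\mapsto a(x)b(y)$, whence
\[
\psi\bigl(a\otimes b|_{\mathcal{G}}\bigr)=\psi(\phi(a)\,\mathbf{1}\,b)=\iota(a)\,\psi(\mathbf{1})\,\iota(b)=\iota(a)\,S\,\iota(b),
\]
a product of generators. By the remark preceding the statement, every $\xi\in C(\mathcal{G})$ is a limit of finite sums of such restricted tensors, in the sup norm and hence, by Proposition \ref{prop:BasicResultsCorr}, in the Hilbert-module norm; since $\psi$ is contractive (from relation (ii), $\|\psi(\xi)\|^{2}=\|\iota(\langle\xi,\xi\rangle)\|\le\|\xi\|_2^{2}$), applying $\psi$ and passing to the limit gives $\psi(\xi)\in C^{\ast}(C(K),S)$. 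Thus $\psi(E)\subseteq C^{\ast}(C(K),S)$.

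Combining the two inclusions yields $(\psi\times\iota)(\mathcal{O}_{\Gamma})=C^{\ast}(C(K),S)$, and faithfulness of $\psi\times\iota$ gives the asserted description. I expect no serious obstacle here: the proof is essentially the factorization $\psi(a\otimes b|_{\mathcal{G}})=\iota(a)S\iota(b)$ together with an approximation. The only points needing care are the bookkeeping identification of $\iota(C(K))$ with the canonical copy of $C(K)$ in $C^{\ast}(G)$, and making sure the density step is carried out in a norm (equivalently the sup norm or the Hilbert-module norm) with respect to which $\psi$ is continuous.
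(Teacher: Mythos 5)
Your proof is correct and follows essentially the same route as the paper's: both identify $\mathcal{O}_{\Gamma}$ with its image under the faithful map $\psi\times\iota$, observe that $\psi(\boldsymbol{1})=S$ and $\psi(a\otimes b)=\iota(a)S\iota(b)$, and conclude via density of sums of elementary tensors in $C(\mathcal{G})$. You simply spell out the approximation and continuity details that the paper leaves implicit.
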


\begin{proof}
As a Cuntz-Pimsner algebra is generated by copies of elements of the algebra and copies of elements of the module, we have that $\mathcal{O}_{\Gamma }$ is
generated by all elements $a\in C(K)$ and $\xi\in C(\mathcal{G})$. It suffices
to note that $\psi (\boldsymbol{1})=S$ where $\boldsymbol{1}$ is the identity of $C(\mathcal{G})$ and $\psi (a\otimes b)=\iota (a)\psi (\boldsymbol{1})\iota (b)$
for $a,b\in C(K)$.
\end{proof}

We recall a definition from \cite{KW1} and give a different proof of the isomorphism
between $\mathcal{O}_{\gamma }$ and $\mathcal{O}_{d}$ for a certain class of
IFS.

\begin{definition}
We say that the IFS $\left\{ \gamma _{i}\right\} _{i=1}^{d}$ satisfies the
cograph separation condition if the cographs
\[
\mathcal{G}_i=\left\{ (\gamma _{i}(y),y)\in K\times K:y\in K\right\} 
\]%
are disjoint.
\end{definition}

We note that the cographs of an IFS are always closed subsets of $\mathcal{G}
$ and if the IFS satisfies the cograph separation condition then they are
also open. In this case, there are no branched points and in particular, it
satisfies the finite branch condition.

\begin{proposition}
If the IFS $\left\{ \gamma _{i}\right\} _{i=1}^{d}$ satisfies the cograph
separation condition then $\mathcal{O}_{\gamma }\simeq \mathcal{O}_{d}$.
\end{proposition}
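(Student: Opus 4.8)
The plan is to leverage everything already established rather than to build the isomorphism from scratch. Since the cograph separation condition forces $B(\gamma_1,\ldots,\gamma_d)=\emptyset$, the system satisfies the finite branch condition, so the preceding propositions apply: they give an injective homomorphism $\psi\times\iota:\mathcal{O}_{\Gamma}\to\mathcal{O}_d$ whose image is the sub-C*-algebra of $\mathcal{O}_d$ generated by $\iota(C(K))$ and $S$. Consequently it remains only to prove that $\psi\times\iota$ is surjective, and since $\mathcal{O}_d=C^*(G)$ is generated by the isometries $S_1,\ldots,S_d$, it suffices to exhibit each $S_i$ inside the image.

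First I would note that under the cograph separation condition each cograph $\mathcal{G}_i$ is clopen in $\mathcal{G}$, so its characteristic function $\chi_{\mathcal{G}_i}$ belongs to $E=C(\mathcal{G})$ and hence $\psi(\chi_{\mathcal{G}_i})$ lies in the image of $\psi\times\iota$. The heart of the argument is the computation of this element directly in $C_c(G)$ from formula (\ref{eq:psi}):
\[
\psi(\chi_{\mathcal{G}_i})(\omega,m-n,\tau)=[m-n=1][\sigma(\omega)=\tau]\,\chi_{\mathcal{G}_i}(F(\omega),F(\tau)).
\]
The key point is to evaluate $\chi_{\mathcal{G}_i}(F(\omega),F(\tau))$ when $\sigma(\omega)=\tau$. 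As observed just after (\ref{eq:psi}), in that case $F(\omega)=\gamma_{\omega_0}(F(\tau))$, so $(F(\omega),F(\tau))\in\mathcal{G}_i$ exactly when $\gamma_{\omega_0}(F(\tau))=\gamma_i(F(\tau))$; because the cograph separation condition means $C(\gamma_1,\ldots,\gamma_d)=\emptyset$, this equality can hold only when $\omega_0=i$. Hence $\chi_{\mathcal{G}_i}(F(\omega),F(\tau))=[\omega_0=i]$ whenever $\sigma(\omega)=\tau$, and comparing the resulting formula with the convolution $\chi_{\overline{i}}*S$ read off from the definitions of $\chi_{\overline{i}}$ and $S$ yields $\psi(\chi_{\mathcal{G}_i})=\chi_{\overline{i}}*S=d^{-1/2}S_i$.

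With this identity the proof finishes at once: each $S_i=d^{1/2}\psi(\chi_{\mathcal{G}_i})$ belongs to the image of $\psi\times\iota$, so the image contains all generators of $\mathcal{O}_d$ and the map is onto; together with the injectivity already proved this gives the isomorphism $\mathcal{O}_{\Gamma}\simeq\mathcal{O}_d$. I expect the only delicate point to be the index bookkeeping and the normalizing factor $d^{1/2}$ in the convolution identity $\psi(\chi_{\mathcal{G}_i})=d^{-1/2}S_i$ within $C_c(G)$; conceptually, however, the entire content is the observation that cograph separation collapses the condition $(F(\omega),F(\tau))\in\mathcal{G}_i$ to the single-coordinate condition $\omega_0=i$, which is precisely what turns $\psi$ of a cograph indicator into a generating Cuntz isometry.
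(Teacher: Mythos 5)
Your argument is correct and follows essentially the same route as the paper: both proofs reduce the statement to showing that the already-established injective homomorphism $\psi\times\iota$ is surjective, and both do so by identifying $\psi(\chi_{\mathcal{G}_i})$ with $\chi_{\overline{i}}\ast S=d^{-1/2}S_i$, so that the image contains the generating Cuntz isometries. The only difference is that you spell out the pointwise verification that cograph separation collapses $(F(\omega),F(\tau))\in\mathcal{G}_i$ to $\omega_0=i$, a computation the paper leaves implicit.
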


\begin{proof}
If $\chi _{\mathcal{G}_i}$ is the characteristic function of $\mathcal{G}_i$ then
it belongs to $C(\mathcal{G})$ and we note that $\psi (\chi _{\mathcal{G}_i})=\chi_{\overline{i}}\ast S$ where $%
\chi_{\overline{i}}$ is the characteristic function of the cylinder $\overline{i}$.
As we've seen in subsection \ref{sec:CuntzAlg}, the elements $S_{i}=d^{1/2}\chi_{\overline{i}}\ast S=d^{1/2}\psi (\chi _{\mathcal{G}_i})$ are $d$ isometries that
satisfies the Cuntz relations and generates $\mathcal{O}_{d}$.
\end{proof}

\subsection{The case of inverse branches of a continuous function}

In this subsection we suppose that there exists a continuous function $\gamma:K\to K$ such that $\gamma\circ\gamma_i=id$ for all $i\in\{1,\ldots,d\}$. Our goal is to show that if the IFS satisfies the finite branch condition than we can see $\mathcal{O}_{\Gamma}$ as an Exel's crossed product by endomorphism \cite{E1}.

We note that $\gamma$ needs not to be a local homeomorphism and in this case we cannot use the construction by Renault \cite{R1} and Deaconu \cite{D1}. But when it does, their construction is isomorphic to Exel's one \cite{EV1}.

We begin by recalling the ingredients to build Exel's crossed product. Let $A$ be a unital C*-algebra and suppose we're given:
	\begin{itemize}
		\item An unital injective endomorphism $\alpha:A\to A$.
		\item A transfer operator $L:A\to A$ for $\alpha$, that is, a positive continuous linear map such that $L(\alpha(a)b)=aL(b)$ for $a,b\in A$. We suppose that $L(1)=1$.
	\end{itemize}

Let $\mathcal{T}(A,\alpha,L)$ be the universal C*-algebra generated by a copy of A and an element $\widehat{S}$ with relations:
	\begin{itemize}
		\item[(i)] $\widehat{S}a=\alpha(a)\widehat{S},$
		\item[(ii)] $\widehat{S}^{*}a\widehat{S}=L(a),$
	\end{itemize}
for $a\in A$. Note that the canonical map from $A$ to $\mathcal{T}(A,\alpha,L)$ is injective.

\begin{definition}
	A redundancy is a pair $(a,k)\in A\times\overline{A\widehat{S}\widehat{S}^{*}A}$ such that $ab\widehat{S}=kb\widehat{S}$ for all $b\in A$.
\end{definition}

\begin{definition}
	The Exel's crossed product $A\rtimes_{\alpha,L}\mathbb{N}$ is the quotient of $\mathcal{T}(A,\alpha,L)$ by the closed two-sided ideal generated by the set of differences $a-k$ for all redundancies $(a,k)$.
\end{definition}

In our case, let $A=C(K)$ and $\alpha:A\to A$ be given by
$$\alpha(a)=a\circ\gamma.$$
Then $L:A\to A$ defined by
$$L(a)=\frac{1}{d}\sum_{i=1}^d a\circ\gamma_i$$
is a transfer operator for $\alpha$.

\begin{theorem}
Let $\Gamma=\left\{ \gamma _{i}\right\} _{i=1}^{d}$ be an IFS satisfying the finite branch condition or the open set condition, and let $A,\alpha$ and $L$ be as above then $A\rtimes_{\alpha,L}\mathbb{N}$ is isomorphic to $\mathcal{O}_{\Gamma}$.
\end{theorem}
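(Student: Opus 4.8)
The plan is to realise the isomorphism as the integrated form of a Cuntz--Pimsner covariant representation of $(A,E)$ inside $B:=A\rtimes_{\alpha,L}\mathbb{N}$, and then to invoke the gauge-invariant uniqueness lemma \ref{lemma:GaugeInvariance} for faithfulness. The decisive simplification provided by the left inverse is that every $(x,y)\in\mathcal{G}$ satisfies $y=\gamma(x)$: indeed if $x=\gamma_i(y)$ then $y=\gamma(\gamma_i(y))=\gamma(x)$. Hence $\mathcal{G}$ is exactly the graph of $\gamma$, the map $\xi\mapsto(x\mapsto\xi(x,\gamma(x)))$ is an isomorphism $C(\mathcal{G})\cong C(K)$, and every element of $E$ has the form $\zeta_c$ with $\zeta_c(x,y)=c(x)$ for a unique $c\in A$.

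Write $\widehat{S}$ for the canonical isometry in $B$ and identify $A$ with its canonical (injective, by \cite{E1}, since $\alpha$ is injective because $\gamma$ is onto) image, so that I may drop $\iota'$ from the notation. I would define $\iota':A\to B$ to be this inclusion and $\psi':E\to B$ by $\psi'(\zeta_c)=\sqrt{d}\,\iota'(c)\widehat{S}$; equivalently $\psi'(\boldsymbol{1})=\sqrt{d}\,\widehat{S}$. Conditions (i) and (ii) of Definition \ref{def:PimsnerCovRep} are then routine: for (i) one uses that on $\mathcal{G}$ one has $b(y)=b(\gamma(x))=\alpha(b)(x)$ together with the relation $\widehat{S}b=\alpha(b)\widehat{S}$, so that $\psi'(\phi(a)\zeta_c b)=\sqrt{d}\,a\,c\,\alpha(b)\widehat{S}=\iota'(a)\psi'(\zeta_c)\iota'(b)$; for (ii) one uses $\langle\zeta_c,\zeta_{c'}\rangle_A=d\,L(\overline{c}c')$ and the relation $\widehat{S}^{*}a\widehat{S}=L(a)$, giving $\psi'(\zeta_c)^{*}\psi'(\zeta_{c'})=d\,\widehat{S}^{*}\overline{c}c'\widehat{S}=d\,L(\overline{c}c')=\iota'(\langle\zeta_c,\zeta_{c'}\rangle_A)$.

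The \emph{main obstacle} is the covariance condition (iii), $(\psi',\iota')^{(1)}(\phi(c))=\iota'(c)$ for $c\in J_E$. By lemma \ref{lemma:FinBranchIdeal} it suffices to treat $c$ with $\mathrm{supp}(c)\subseteq K\setminus B$, for which lemma \ref{lemma:NiceOpenSet} and remark \ref{rem:CompactInvIm} give $\phi(c)=\sum_k\theta_{\xi_k,\eta_k}$ with $\xi_k=\zeta_{c\sqrt{\varphi_k}}$ and $\eta_k=\zeta_{\sqrt{\varphi_k}}$. Since $\psi'(\xi_k)\psi'(\eta_k)^{*}=d\,(c\sqrt{\varphi_k})\,\widehat{S}\widehat{S}^{*}\,\sqrt{\varphi_k}$, I must show that $\kappa_c:=d\sum_k(c\sqrt{\varphi_k})\widehat{S}\widehat{S}^{*}\sqrt{\varphi_k}$ equals $c$ in $B$. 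I would do this by checking that $(c,\kappa_c)$ is a redundancy: applying the relations $\widehat{S}^{*}a\widehat{S}=L(a)$ and $\widehat{S}a=\alpha(a)\widehat{S}$ gives $\kappa_c b\widehat{S}=\big(d\sum_k c\sqrt{\varphi_k}\,\alpha(L(\sqrt{\varphi_k}b))\big)\widehat{S}$ for every $b\in A$, and the scalar identity $d\sum_k c\sqrt{\varphi_k}\,\alpha(L(\sqrt{\varphi_k}b))=cb$ is precisely the local computation already carried out at the end of the proof of Proposition \ref{PropCovRep}: evaluating at $x$, the factor $\sqrt{\varphi_k(x)}\sqrt{\varphi_k(\gamma_j(\gamma(x)))}$ forces $\gamma_j(\gamma(x))=x$ by properties (ii)--(iii) of lemma \ref{lemma:NiceOpenSet}, after which $\sum_k\varphi_k(x)=1$ leaves $c(x)b(x)$. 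Thus $cb\widehat{S}=\kappa_c b\widehat{S}$, so $(c,\kappa_c)$ is a redundancy, $\kappa_c=c$ in $B$, and (iii) holds for $\mathrm{supp}(c)\subseteq K\setminus B$; density in $J_E$ then gives it in general.

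Granting (iii), $(\iota',\psi')$ is a covariant representation, so the universal property yields a homomorphism $\psi'\times\iota':\mathcal{O}_{\Gamma}\to B$. It is surjective because its image contains $\iota'(A)=A$ and $d^{-1/2}\psi'(\boldsymbol{1})=\widehat{S}$, which generate $B$. For injectivity I apply lemma \ref{lemma:GaugeInvariance}: $\iota'$ is faithful, and the circle action on $\mathcal{T}(A,\alpha,L)$ fixing $A$ and sending $\widehat{S}\mapsto z\widehat{S}$ preserves the defining relations and carries redundancies to redundancies, hence descends to a gauge action $\beta'$ on $B$ with $\beta'_z\circ\iota'=\iota'$ and $\beta'_z\circ\psi'=z\psi'$. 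Lemma \ref{lemma:GaugeInvariance} then shows $\psi'\times\iota'$ is faithful, and therefore an isomorphism $\mathcal{O}_{\Gamma}\cong A\rtimes_{\alpha,L}\mathbb{N}$. (When $\gamma$ happens to be a local homeomorphism this recovers, via \cite{EV1}, the Renault--Deaconu picture.)
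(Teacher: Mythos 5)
Your proof is correct and follows the same overall strategy as the paper's: build a covariant representation of $(A,E)$ in $A\rtimes_{\alpha,L}\mathbb{N}$, verify Pimsner covariance on $J_E$ via the redundancy $(c,\kappa_c)$ using lemma \ref{lemma:NiceOpenSet} and remark \ref{rem:CompactInvIm}, and conclude with surjectivity plus the gauge-invariant uniqueness lemma \ref{lemma:GaugeInvariance}. Two of your choices, however, genuinely improve on the paper's execution. First, the observation that $\gamma\circ\gamma_i=\mathrm{id}$ forces $\mathcal{G}$ to be the graph of $\gamma$, so that $E\cong C(K)$ via $\xi\mapsto\xi(\cdot,\gamma(\cdot))$, lets you define $\psi'$ on all of $E$ at once; the paper instead defines $\psi$ only on monomials $a\otimes b\mapsto aSb$ and must prove a norm estimate $\|\sum_j a_jSb_j\|\leq\|\sum_j a_j\otimes b_j\|_2$ to extend by density. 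Second, and more substantively, your normalization $\psi'(\zeta_c)=\sqrt{d}\,c\widehat{S}$ is the correct one: with $L=\frac{1}{d}\sum_i(\cdot)\circ\gamma_i$ one has $\langle a\otimes b,e\otimes f\rangle_A=d\,b^{*}L(a^{*}e)f$, whereas the paper's unnormalized $\psi(a\otimes b)=aSb$ gives $\psi(a\otimes b)^{*}\psi(e\otimes f)=b^{*}L(a^{*}e)f$, off by a factor of $d$; the paper's verification of condition (ii) and its redundancy computation each silently drop this factor (the latter by omitting the $\frac{1}{d}$ when expanding $\alpha\circ L$), and the two slips compensate only if one inserts the $\sqrt{d}$ you use. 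Your version of the redundancy computation, where the $d$ from $\psi'(\xi_k)\psi'(\eta_k)^{*}$ cancels the $\frac{1}{d}$ in $L$, is the consistent account. The remaining steps (faithfulness of $\iota'$ and existence of the gauge action on the crossed product) are handled in the paper by citing \cite{E2}, while you sketch the descent of the circle action from $\mathcal{T}(A,\alpha,L)$ directly; either is fine.
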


\begin{proof}
	The steps of the proof are similar to what we have done last subsection. Let $(A=C(K),E=C(\mathcal{G}))$ be the C*-correspondence given in last subsection. We start by giving a covariant representation of $(A,E)$ in $A\rtimes_{\alpha,L}\mathbb{N}$. 
	
	Let $\iota:A\to A\rtimes_{\alpha,L}\mathbb{N}$ be the canonical inclusion and $\psi:E\to A\rtimes_{\alpha,L}\mathbb{N}$ be given by
	$$\psi(a\otimes b)=aSb$$
for $a,b\in A$. To show that $\psi$ is well defined in all $C(\mathcal{G})$, we let $\sum_j a_j\otimes b_j$ be a finite sum where $a_j,b_j\in A$, then
	$$\left\|\psi\left(\sum_j a_j\otimes b_j\right)\right\|=\left\|\sum_j a_jS b_j\right\|=\left\|\sum_j a_j\alpha(b_j)S\right\|\leq\left\|\sum_j a_j\alpha(b_j)\right\|=$$
	$$=\sup_{x\in K}\left |\sum_j a_j(x)b_j(\gamma(x))\right |\leq\left(\sup_{x\in K}\sum_{i=1}^d{\left |\sum_j a_j(\gamma_i(x)) b_j(x)\right |^2}\right)^{1/2}=\left\|\sum_j a_j\otimes b_j\right\|_2$$
	where $\left\|\cdot\right\|_2$ is the norm in $C(\mathcal{G})$ thinking of $C(\mathcal{G})$ as an $A$-Hilbert module. To justify that the second inequality above holds, we note that because $K$ is self-similar, for any $x\in K$ there is $y\in K$ such that $x=\gamma_i(y)$ for some $i=1,\ldots,d$.
	
	We have to show that $(\iota,\psi)$ is a Cuntz-Pimsner covariant representation, i.e., it satisfies conditions (i)-(iii) of definition \ref{def:PimsnerCovRep}. Condition (i) is easily verified. For (ii), it suffices to show for monomials $a\otimes b, e\otimes f\in C(\mathcal{G})$ because of linearity and continuity. We have
	$$\left<a\otimes b,e\otimes f\right>(y)=\sum_i\overline{a(\gamma_i(y))b(y)}e(\gamma_i(y))f(y)=b^{*}(y)L(a^{*}e)(y)f(y)$$
	and then
	$$\iota(\left<a\otimes b,e\otimes f\right>)=b^{*}L(a^{*}e)f=b^{*}S^{*}a^{*}eSf=\psi(a\otimes b)^{*}\psi(e\otimes f).$$
	
	Finally, for condition (iii), we take $a\in J_E$ such that $\mathrm{supp}(a)\subseteq K\backslash B$ and $\xi_k,\eta_k$ as in remark \ref{rem:CompactInvIm}. Then
$$(\iota,\psi)^{(1)}(\phi(a))=\sum_k\psi(\xi_k)\psi(\eta_k)^{*}=\sum_ka\sqrt{\varphi_k}SS^{*}\sqrt{\varphi_k}$$
and we have to show that this equals $a$ inside $A\rtimes_{\alpha,L}\mathbb{N}$. For that, we show that the pair $(a,\sum a\sqrt{\varphi_k}\widehat{S}\widehat{S}^{*}\sqrt{\varphi_k})$ is a redundancy. We let $b\in A$, then
 $$\sum a\sqrt{\varphi_k}\widehat{S}\widehat{S}^{*}\sqrt{\varphi_k}b\widehat{S}=\sum a\sqrt{\varphi_k}(\alpha\circ L)(\sqrt{\varphi_k}b)\widehat{S}.$$
 To show that the pair above is a redundancy, it suffices to show that
 $$b(x)=\sum\sqrt{\varphi_k}(x)(\alpha\circ L)(\sqrt{\varphi_k}b)(x)$$
 for $x\in\mathrm{supp}(a)$. For such $x$, we have that $x\notin B$ and hence, there is a unique $i_0$ and a unique $y$ such that $\gamma_{i_0}(y)=x$. If $\varphi_k(x)=\varphi_k(\gamma_{i_0}(y))\neq 0$ then $i_0\in I(x_k)$ because of (iii) of lemma \ref{lemma:NiceOpenSet} and because of (ii) we have that $\gamma_i(y)\notin U_{x_k}$ for $i\neq i_0$. It follows that
	$$\sum\sqrt{\varphi_k}(x)(\alpha\circ L)(\sqrt{\varphi_k}b)(x)=\sum\sqrt{\varphi_k}(x)\sum_{i=1}^d\sqrt{\varphi_k}(\gamma_i(\gamma(x)))b(\gamma_i(\gamma(x)))=$$	
$$\sum\sqrt{\varphi_k}(x)\sum_{i=1}^d\sqrt{\varphi_k}(\gamma_i(y))b(\gamma_i(y))=\sum\sqrt{\varphi_k}(x)\sqrt{\varphi_k}(\gamma_{i_0}(y))b(\gamma_{i_0}(y))=$$
	$$\sum\varphi_k(x)b(x)=b(x).$$
	
	By the universality of $\mathcal{O}_{\Gamma}$, we have a homomorphism $\iota\times\psi:\mathcal{O}_{\Gamma}\to A\rtimes_{\alpha,L}\mathbb{N}$. Since $A\rtimes_{\alpha,L}\mathbb{N}$ is generated by $A$ and $S$, and $\iota(A)=A$, $\psi(\boldsymbol{1})=S$, where $\boldsymbol{1}$ is the unity of $C(\mathcal{G})$, we have that $\iota\times\psi$ is surjective.
	
	To show that $\iota\times\psi$ is injective, we first note that $\iota$ is faithful \cite{E2}. Then we see that $\beta:\mathbb{T}\to A\rtimes_{\alpha,L}\mathbb{N}$ given by $\beta_z(a)=a$ and $\beta_z(S)=zS$ is an action of the circle in $A\rtimes_{\alpha,L}\mathbb{N}$ \cite{E2} which clearly satisfies the conditions of lemma \ref{lemma:GaugeInvariance}.
\end{proof}

\subsubsection*{Acknowledgements} The author would like to thank his three advisors: Ruy Exel, Artur Lopes and Jean Renault. The author would also like to thank the Université d'Orléans and the MAPMO for their hospitality.

\bibliographystyle{amsplain}

\end{document}